\documentclass[12pt,british]{article}

\usepackage{babel}
\usepackage{amsmath,amsfonts,amssymb,amsthm}
\usepackage{enumerate,framed}
\usepackage{textcomp,url}
\usepackage{eucal}

\usepackage{ucs} 
\usepackage[utf8x]{inputenc} 
\usepackage[T1]{fontenc} 

\usepackage{epsfig}

\usepackage[noBBpl]{mathpazo}
\usepackage{mathabx}

\usepackage[matrix,arrow]{xy}



%



\setlength{\textwidth}{16.45cm}
\setlength{\textheight}{23.5cm}
\setlength{\oddsidemargin}{0cm}
\setlength{\topmargin}{-2cm}

\theoremstyle{plain}
\newtheorem{thm}{Theorem}

\theoremstyle{remark}
\newtheorem*{rem}{Remark}

\newcommand{\reqref}[1]{(\protect\ref{eq:#1})}
\newcommand{\resec}[1]{Section~\protect\ref{sec:#1}}

\newcommand{\reth}[1]{Theorem~\protect\ref{th:#1}}
\newcommand{\req}[1]{equation~(\protect\ref{eq:#1})}

\makeatletter

\renewcommand{\p@enumii}{}

\def\@enum@{\list{\csname label\@enumctr\endcsname}%
           {\usecounter{\@enumctr}\def\makelabel##1{
\normalfont\ignorespaces\emph{{##1}~}}
\setlength{\labelsep}{3pt}
\setlength{\parsep}{0pt}
\setlength{\itemsep}{0pt}
\setlength{\leftmargin}{0pt}
\setlength{\labelwidth}{0pt}
\setlength{\listparindent}{\parindent}
\setlength{\itemsep}{0pt}
\setlength{\itemindent}{0pt}
\topsep=3pt plus 1pt minus 1 pt}}

\def\@map#1#2[#3]{\mbox{$#1 \colon\thinspace #2 \to #3$}}
\def\map#1#2{\@ifnextchar [{\@map{#1}{#2}}{\@map{#1}{#2}[#2]}}

\makeatother

\newcommand{\rp}{\ensuremath{\mathbb{R}P^2}}
\newcommand{\dt}{\ensuremath{\mathbb D}^{2}}
\newcommand{\Z}{\ensuremath{\mathbb Z}}
\newcommand{\N}{\ensuremath{\mathbb N}}
\newcommand{\St}[1][2]{\ensuremath{\mathbb S}^{#1}}
\newcommand{\Et}{\ensuremath{\mathbb E}^{\,2}}

\renewcommand{\to}{\ensuremath{\longrightarrow}}
\renewcommand{\ker}[1]{\ensuremath{\operatorname{\text{Ker}}\left({#1}\right)}}
\DeclareRobustCommand*{\up}[1]{\textsuperscript{#1}}
\newcommand{\brak}[1]{\ensuremath{\left\{ #1 \right\}}}

\newcommand{\lhra}{\mathrel{\lhook\joinrel\to}}

\newcommand{\ang}[1]{\ensuremath{\left\langle #1\right\rangle}}

\newcommand{\setangr}[2]{\ensuremath{\ang{#1 \,\left\lvert \, #2 \right.}}}
\newcommand{\setangl}[2]{\ensuremath{\ang{\left. #1 \,\right\rvert \, #2}}}
\newcommand{\setr}[2]{\ensuremath{\brak{#1 \,\left\lvert \, #2 \right.}}}
\newcommand{\setl}[2]{\ensuremath{\brak{\left. #1 \,\right\rvert \, #2}}}

\renewcommand{\epsilon}{\varepsilon}
\renewcommand{\th}{\ensuremath{\up{th}}}
\newcommand{\pnm}[1][n]{\ensuremath{P_{{#1}}(M)}}
\newcommand{\sn}[1][n]{\ensuremath{S_{{#1}}}}

\newcommand{\indalp}[3]{\ensuremath{\alpha_{#1,#2,#3}}}
\newcommand{\indbet}[3]{\ensuremath{\beta_{#1,#2,#3}}}
\newcommand{\indgam}[3]{\ensuremath{\gamma_{#1,#2,#3}}}
\newcommand{\indeta}[3]{\ensuremath{\eta_{#1,#2,#3}}}

\newcommand{\texorpdfstring}[2]{#1}

\begin{document}

\title{Braid groups of non-orientable surfaces and the Fadell-Neuwirth short exact sequence}  

\author{Daciberg~Lima~Gon\c{c}alves\\
Departamento de Matem\'atica - IME-USP,\\
Caixa Postal~\textup{66281}~-~Ag.~Cidade de S\~ao Paulo,\\ CEP:~\textup{05314-970} - S\~ao Paulo - SP - Brazil.\\ 
e-mail: \url{dlgoncal@ime.usp.br}\vspace*{4mm}\\
John~Guaschi\\
Laboratoire de Math\'ematiques Nicolas Oresme UMR CNRS~\textup{6139}\\ Universit\'e de Caen BP~\textup{5186},
14032 Caen Cedex, France.\\
e-mail: \url{guaschi@math.unicaen.fr}}

\date{20\up{th}~April~2009}           

\maketitle

\begin{abstract}
\noindent 
Let $M$ be a compact, connected non-orientable surface without boundary and of genus $g\geqslant 3$. We investigate the pure braid groups $P_n(M)$ of $M$, and in particular the possible splitting of the Fadell-Neuwirth
short exact sequence 
\begin{equation*}
1 \to P_m(M \setminus\brak{x_1,\ldots,x_n})
\lhra P_{n+m}(M) \stackrel{p_{\ast}}{\to} P_n(M) \to 1,
\end{equation*}
where $m,n\geqslant 1$, and $p_{\ast}$ is the homomorphism which
corresponds geometrically to forgetting the last $m$ strings. This
problem is equivalent to that of the existence of a section for the
associated fibration $\map{p}{F_{n+m}(M)}[F_n(M)]$  of configuration spaces, defined by $p((x_{1},\ldots,x_{n},x_{n+1}, \ldots, x_{n+m}))= (x_{1}, \ldots, x_{n})$. We show that $p$ and $p_{\ast}$ admit a section if and only if $n=1$. Together with previous results, this completes the resolution of the splitting problem for surfaces pure braid groups.
\end{abstract}

\section{Introduction}\label{sec:intro}

Braid groups of the plane were defined by Artin in~1925~\cite{A1}, and
further studied in~\cite{A2,A3}. Braid groups of surfaces were studied by
Zariski~\cite{Z}, and were later generalised using the
following definition due to Fox~\cite{FoN}. Let $M$ be a compact,
connected surface, and let $n\in\N$. We denote the set of all ordered
$n$-tuples of distinct points of $M$, known as the \emph{$n\th$
configuration space of $M$}, by: 
\begin{equation*}
F_n(M)=\setr{(p_1,\ldots,p_n)}{\text{$p_i\in M$ and $p_i\neq p_j$ if
$i\neq j$}}.
\end{equation*}
Configuration spaces play an important r\^ole in several branches of
mathematics and have been extensively studied, see~\cite{CG,FH} for
example. 

The symmetric group $\sn$ on $n$ letters acts freely on $F_n(M)$ by
permuting coordinates. The corresponding quotient space will be denoted by
$D_n(M)$. Notice that $F_n(M)$ is a regular covering of $D_n(M)$. The
\emph{$n\th$ pure braid group $P_n(M)$} (respectively the \emph{$n\th$
braid group $B_n(M)$}) is defined to be the fundamental group of
$F_n(M)$ (respectively of $D_n(M)$). If $m\in \N$, then we may define
a homomorphism \( \map{p_{\ast}}{\pnm[n+m]}[\pnm] \) induced by the
projection \( \map{p}{F_{n+m}(M)}[F_n(M)] \) defined by \(
p((x_1,\ldots, x_n,x_{n+1}, \ldots, x_{n+m}))= (x_1,\ldots,x_n) \).
Representing $\pnm[n+m]$ geometrically as a collection of $n+m$
strings, $p_{\ast}$ corresponds to forgetting the last $m$ strings.
\textbf{We adopt the convention, that unless explicitly stated, all
homomorphisms \( \pnm[n+m] \to \pnm \) in the text will be this one}. If $M$ is the $2$-disc (or the plane $\mathbb{R}^2$), $B_{n}(M)$ and $P_{n}(M)$ are respectively the classical Artin braid group $B_{n}$ and pure braid group $P_{n}$~\cite{FvB}.

If $M$ is without boundary, Fadell and Neuwirth study the map $p$, and
show (\cite[Theorem~3]{FaN}) that it is a locally-trivial fibration.
The fibre over a point $(x_1,\ldots,x_n)$ of the base space is
$F_m(M\setminus\brak{x_1,\ldots,x_n})$ which we interpret as a
subspace of the total space via the map $\map
i{F_m(M\setminus\brak{x_1,\ldots,x_n})}[F_n(M)]$ defined by
\begin{equation*}
i\left((y_1,\ldots,y_m)\right)=(x_1,\ldots,x_n,y_1,\ldots,y_m).
\end{equation*}
Applying the associated long exact sequence in homotopy, we obtain the \emph{pure
braid group short exact sequence of Fadell and Neuwirth}:
\begin{equation}\label{eq:split}
1 \to P_m\left(M\setminus\brak{x_1,\ldots,x_n}\right) \stackrel{i_{\ast}}{\to} \pnm[n+m] \stackrel{p_{\ast}}{\to}
\pnm \to 1, \tag{\normalfont\textbf{PBS}}
\end{equation}
where $n\geqslant 3$ if $M$ is the sphere $\St$~\cite{Fa,FvB}, $n\geqslant 2$ if $M$ is
the real projective plane $\rp$~\cite{vB}, and $n\geqslant 1$ otherwise~\cite{FaN},
and where $i_{\ast}$ and $p_{\ast}$ are the homomorphisms induced by the maps
$i$ and $p$ respectively. The short exact sequence~\reqref{split} has been widely studied, and may be employed for example to determine presentations of $P_n(M)$ (see \resec{pres}), its centre, and possible torsion.  It was also used in recent work on the structure of the mapping class groups~\cite{PR} and on Vassiliev invariants for surface
braids~\cite{GMP}. 

In the case of $P_{n}$, and taking $m=1$, $\ker{p_{\ast}}$ is a free group of rank $n$. The 
short exact sequence~\reqref{split} splits for all $n\geqslant 1$, and so $P_{n}$ may be described as a repeated semi-direct product of free groups. This decomposition, known as the `combing' operation, is the principal result of Artin's classical theory of braid groups~\cite{A2}, and yields normal forms and a solution to the word problem in $B_{n}$. More recently, it was used by Falk and Randell to study the lower central series and the residual nilpotence of $P_n$~\cite{FR}, and by Rolfsen and Zhu to prove that $P_n$ is bi-orderable~\cite{RZ}.

The problem of deciding whether such a decomposition exists for
braid groups of surfaces is thus fundamental. This was indeed a recurrent
and central question during the foundation of the theory and its
subsequent development during the 1960's~\cite{Fa,FaN,FvB,vB,Bi1}. If
the fibre of the fibration is an Eilenberg-MacLane space then the
existence of a section for $p_{\ast}$ is equivalent to that of a
cross-section for $p$~\cite{Ba,Wh1} (cf.~\cite{GG3}). But with the
exception of the construction of sections in certain cases (for $\St$~\cite{Fa} and the $2$-torus $\mathbb{T}^2$~\cite{Bi1}), no progress on the
possible splitting of~\reqref{split} was recorded for nearly forty
years. In the case of orientable surfaces without boundary of genus at
least two, the question of the splitting of~\reqref{split} which was
posed explicitly by Birman in 1969~\cite{Bi1}, was finally resolved by
the authors, the answer being positive if and only if
$n=1$~\cite{GG1}. 

As for the non-orientable case, the braid groups of $\rp$ were first studied by Van
Buskirk~\cite{vB}, and more recently by Wang~\cite{Wa1} and the authors~\cite{GG3,GG7,GG8}. For $n=1$, we have $P_1(\rp)=B_1(\rp)\cong \Z_2$. Van Buskirk showed that for all $n\geqslant 2$, neither the fibration
$\map{p}{F_n(\rp)}[F_1(\rp)]$ nor the homomorphism
$\map{p_{\ast}}{P_n(\rp)}[P_1(\rp)]$  admit a cross-section (for $p$,
this is a manifestation of the fixed point property of $\rp$), but
that the fibration $\map{p}{F_3(\rp)}[F_2(\rp)]$ admits a
cross-section, and hence so does the corresponding homomorphism $p_{\ast}$. Using coincidence theory, we showed that for $n=2,3$ and $m\geqslant 4-n$, neither the fibration
nor the short exact sequence~\reqref{split} admit a section~\cite{GG3}. In~\cite{GG7}, we gave a complete answer to the splitting problem for $\rp$: if $m,n\in \N$, the homomorphism $\map{p_{\ast}}{P_{n+m}(\rp)}[P_n(\rp)]$ and
the fibration $\map{p}{F_{n+m}(\rp)}[F_n(\rp)]$ admit a section if and only if
$n=2$ and $m=1$. In other words, Van Buskirk's values ($n=2$ and $m=1$) are the only
ones for which a section exists (both on the geometric and the
algebraic level). 

In this paper, we study the splitting problem for compact, connected non-orientable surfaces without boundary and of genus $g\geqslant 3$ (every non-orientable compact surface $M$ without boundary is homeomorphic to the connected sum of $g$ copies of $\rp$, $g\in \N$ being the \emph{genus} of $M$). In the case  of the Klein bottle $\mathbb{K}^2$ ($g=2$), the existence of a non-vanishing vector field implies that there always exists a section, both geometric and algebraic (cf.~\cite{FaN}). Our main result is:
\begin{thm}\label{th:nosplitg}
Let $M$ be a compact, connected, non-orientable surface without boundary of genus $g\geqslant 3$, and let $m,n\in \N$. Then the homomorphism $\map{p_{\ast}}{P_{n+m}(M)}[P_n(M)]$ and
the fibration $\map{p}{F_{n+m}(M)}[F_n(M)]$ admit a section if and only if
$n=1$.
\end{thm}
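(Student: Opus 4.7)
Since $M$ is non-orientable of genus $g\geqslant 3$, it is aspherical, and hence each $F_k(M)$ is a $K(\pi,1)$ by the iterated Fadell--Neuwirth fibrations. The fibre $F_m(M\setminus\brak{x_1,\ldots,x_n})$ is therefore Eilenberg--MacLane, so by the classical result of~\cite{Ba,Wh1} cited in the Introduction, a section of $p$ exists if and only if one of $p_{\ast}$ exists, and the theorem reduces to a purely algebraic statement. The sufficiency ($n=1$) I would handle by constructing a section $\sigma\colon M\to F_{1+m}(M)$ by direct geometric means, using a handle decomposition of $M$ with a single $2$-cell: on the $1$-skeleton $m$ continuous auxiliary maps are easy to define by small translations along the attaching graph, and extending over the unique $2$-cell reduces to arranging that the boundary trajectory be null-homotopic in the configuration space of a disc, which can always be achieved.

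The substance of the theorem is the necessity. Suppose $p_{\ast}\colon P_{n+m}(M)\to P_n(M)$ admits a section $s$ with $n\geqslant 2$. Post-composing $s$ with the homomorphism $P_{n+m}(M)\to P_{n+1}(M)$ that forgets strings $n+2,\ldots,n+m$ yields a section of $P_{n+1}(M)\to P_n(M)$, so we may assume $m=1$. The kernel $F$ of $p_{\ast}$ is then the free group $\pi_1(M\setminus\brak{x_1,\ldots,x_n})$, of rank $g+n-1$. Passing to the quotient by $[F,F]$ gives an extension
\begin{equation*}
1 \to F^{\textup{ab}} \to P_{n+1}(M)/[F,F] \to P_n(M) \to 1,
\end{equation*}
in which $F^{\textup{ab}}\cong\Z^{g+n-1}$ carries a $\Z[P_n(M)]$-module structure via conjugation. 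The hypothetical section $s$ descends to a section of this abelian extension, so it suffices to show that the corresponding extension class in $H^2(P_n(M);F^{\textup{ab}})$ is nonzero.

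The heart of the proof, and the main obstacle, is to establish this nonvanishing. My plan is to fix an explicit presentation of $P_n(M)$ whose essential defining relations are the non-orientable surface relations carried on each strand $i\in\brak{1,\ldots,n}$, of the schematic form
\begin{equation*}
a_{i,1}^{2}\,a_{i,2}^{2}\,\cdots\,a_{i,g}^{2} \;=\; W_{i}(B_{j,k}),
\end{equation*}
where $a_{i,k}$ is the $k\th$ generator of $\pi_1(M)$ carried by the $i\th$ strand and $W_i$ is an explicit word in the string-entanglement generators $B_{j,k}$. Applying $s$ to each such relation and reducing modulo $[F,F]$ converts it into a linear equation in $F^{\textup{ab}}$ on the fibre contributions $s(a_{i,k})a_{i,k}^{-1}$ and $s(B_{j,k})B_{j,k}^{-1}$. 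Computing the $P_n(M)$-action on $F^{\textup{ab}}$ via Fox calculus on loops around the $n$ punctures, one verifies that the squares appearing in the surface relations contribute monodromy terms rendering the resulting $\Z$-linear system inconsistent for $g\geqslant 3$. These squares are precisely what distinguishes the non-orientable case from the orientable one treated in~\cite{GG1}, and the hypothesis $g\geqslant 3$ is essential: for $g=2$ (the Klein bottle) the analogous system is solvable, consistent with the existence of a section in that case.
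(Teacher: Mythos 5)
Your strategy is the paper's: reduce to $m=1$ by composing the section with the map forgetting the extra strings, pass to the quotient of $P_{n+1}(M)$ by the commutator subgroup of the free kernel $K$, so that $K/[K,K]\cong\Z^{\,n+g-1}$, and show that the resulting abelian-kernel extension of $P_n(M)$ cannot split. The gap is precisely at what you call the heart of the proof. Writing $s(B_{i,j})B_{i,j}^{-1}$ and $s(\rho_{k,l})\rho_{k,l}^{-1}$ in a basis of $K/[K,K]$ introduces on the order of $\bigl(\binom{n}{2}+ng\bigr)(n+g-1)$ integer unknowns, while the $n$ surface relations contribute only $n(n+g-1)$ scalar equations; that system is visibly solvable (the coefficients $\beta_{i,j,q}$ of $s(B_{i,j})$ occur in it essentially freely), so the inconsistency you assert ``one verifies'' does not hold for the system you actually describe. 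In the paper the contradiction is obtained only after the conjugation relations $\rho_{i,k}\rho_{j,l}\rho_{i,k}^{-1}=\cdots$ and $\rho_{k,l}B_{i,j}\rho_{k,l}^{-1}=\cdots$ of Theorem~\ref{th:basicpres} are exploited at length to force $\gamma_{i,k,l}=0$ for $k\neq l$, $\eta_{i,k,j}=0$, $\gamma_{i,k,k}=\gamma_{i,1,1}$, and to tie the $\beta_{i,j,q}$ to the $\gamma_{i,1,1}$; only then do the surface relations collapse to the two identities $(n+g-2)\Delta=(n-1)-L$ and $(n+g-2)\Delta=(1-g)L+(n-1)$, whence $L=0$ and $\Delta=(n-1)/(n+g-2)$, which is not an integer for $n\geqslant 2$ and $g\geqslant 3$. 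You would also need to handle the fact that $B_{n,n+1}$ is not a basis element of $K/[K,K]$ (the abelianised fibre relation expresses it in the remaining generators), which is the source of the repeated $j=n$ versus $j<n$ case distinctions in the paper and of the appearance of $g$ in the final formula.

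The sufficiency direction is likewise not established by your sketch: for $n=1$ the extension of the putative cross-section over the $2$-cell is the entire difficulty (it is an obstruction-theoretic statement about a loop in a configuration space, not something that ``can always be achieved''), and the paper instead deduces it from Theorem~1 of~\cite{GG1}, using the decomposition of $M_g$, $g\geqslant 3$, as a connected sum of one or two copies of $\rp$ with a compact orientable surface of genus at least one.
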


Applying \reth{nosplitg} and the results of~\cite{GG1,GG7}, we may solve completely the splitting problem for surface pure braid groups:
\begin{thm}\label{th:complete}
Let $m,n\in \N$ and $r\geqslant 0$. Let $N$ be a compact, connected surface possibly with boundary, let $\brak{x_{1},\ldots x_{r}}$ be a finite subset in the interior of $N$, let $M=N\setminus \brak{x_{1},\ldots x_{r}}$, and let $\map{p_{\ast}}{P_{n+m}(M)}[P_{n}(M)]$ be the standard projection.
\begin{enumerate}[(a)]
\item If $r> 0$ or if $M$ has non-empty boundary then $p_{\ast}$ admits a section for all $m$ and $n$.
\item Suppose that $r=0$ and that $M$ is without boundary. Then $p_{\ast}$ admits a section if and only if one of the following conditions holds:
\begin{enumerate}[(i)]
\item $M$ is $\St$, the $2$-torus $\mathbb{T}^2$ or the Klein bottle $\mathbb{K}^2$ (for all $m$ and $n$).
\item $M=\rp$, $n=2$ and $m=1$.
\item $M\neq \rp,\St,\mathbb{T}^2,\mathbb{K}^2$ and $n=1$.
\end{enumerate}
\end{enumerate}
\end{thm}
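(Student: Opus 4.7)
The plan is to deduce Theorem~\ref{th:complete} by combining Theorem~\ref{th:nosplitg} with previously established results from the literature, supplemented by an explicit construction of sections in case~(a).

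For part~(a), when $M = N \setminus \{x_1, \ldots, x_r\}$ has $r > 0$ or $\partial N \neq \emptyset$, the strategy is to exploit the ``room at infinity'' provided by the boundary components or punctures. One picks a collar or punctured-disc neighborhood $W$ of a boundary component or of a puncture, fixes $m$ distinct points $y_1, \ldots, y_m \in W$, and constructs a self-embedding $\varphi\colon M \hookrightarrow M$ isotopic to $\mathrm{id}_M$ through embeddings, with $\varphi(M) \cap W = \emptyset$ (a standard radial push-away inside a slightly larger collar, extended by the identity elsewhere). The map $\sigma(x_1, \ldots, x_n) = (\varphi(x_1), \ldots, \varphi(x_n), y_1, \ldots, y_m)$ is then a well-defined continuous map $F_n(M) \to F_{n+m}(M)$, and $p \circ \sigma$ is homotopic to $\mathrm{id}_{F_n(M)}$ via the isotopy from $\mathrm{id}_M$ to $\varphi$ applied coordinatewise. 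Passing to fundamental groups gives $p_{\ast} \circ \sigma_{\ast} = \mathrm{id}_{P_n(M)}$, which is the desired section of $p_{\ast}$ for all $m, n \in \N$.

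For part~(b), I would proceed by case analysis, invoking the appropriate construction or obstruction in each instance. The positive cases are: $M = \St$, by Fadell's classical section \cite{Fa} (the cases $n = 1, 2$ being trivial since $P_1(\St)$ and $P_2(\St)$ are trivial); $M = \mathbb{T}^2$, by Birman's section \cite{Bi1}; $M = \mathbb{K}^2$, by the section coming from its non-vanishing vector field (cf.~\cite{FaN}); $M = \rp$ with $(n, m) = (2, 1)$, by Van Buskirk's construction \cite{vB}; and, at $n = 1$, closed surfaces of higher genus, which are covered by \cite{GG1} in the orientable case of genus $\geq 2$ and by the positive direction of Theorem~\ref{th:nosplitg} in the non-orientable case of genus $\geq 3$. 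The negative cases are: $M = \rp$ with $(n, m) \neq (2, 1)$, by \cite{vB, GG3, GG7}; closed orientable $M$ of genus $\geq 2$ with $n \geq 2$, by \cite{GG1}; and closed non-orientable $M$ of genus $\geq 3$ with $n \geq 2$, by Theorem~\ref{th:nosplitg}.

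The main obstacle in the overall argument is Theorem~\ref{th:nosplitg} itself, whose proof occupies the body of this paper. Once that theorem is in hand, Theorem~\ref{th:complete} follows readily from the above case analysis together with the section construction of part~(a); no new ideas are required beyond careful bookkeeping of the trichotomy in (b)(i)--(iii).
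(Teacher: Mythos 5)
Your proposal is correct and follows essentially the same route as the paper: a case-by-case assembly of \reth{nosplitg} with the known results for $\St$, $\mathbb{T}^2$, $\mathbb{K}^2$ and $\rp$. The only difference is cosmetic: in part~(a) you spell out the ``push into a collar and add points'' construction explicitly (note that $p\circ\sigma$ is only freely homotopic to $\mathrm{id}_{F_n(M)}$, so $p_{\ast}\circ\sigma_{\ast}$ is an inner automorphism and must be corrected by composing with its inverse), whereas the paper delegates this to Proposition~27 and Theorem~6 of~\cite{GG1} and reduces the non-empty-boundary case to the punctured case via a homotopy equivalence of configuration spaces.
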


The rest of the paper is organised as follows. In \resec{pres}, we determine a presentation of $P_n(M)$ (\reth{basicpres}). In \resec{setup}, we study the consequences of the existence of a section in the case $m=1$ and $n\geqslant 2$, \emph{i.e.} $\map{p_{\ast}}{P_{n+1}(M)}[P_n(M)]$. The general strategy of the proof of \reth{nosplitg} is based on the following remark. Suppose that~\reqref{split} splits. If $H$ is any normal subgroup of $P_{n+1}(M)$ contained in $\ker{p_{\ast}}$, the quotiented short exact sequence \(
1\to \ker{p_{\ast}}/H \lhra P_{n+1}(M)/H \to P_n(M)\to 1 \) must also
split. In order to obtain a contradiction, we seek such a subgroup
$H$ for which this short exact sequence does \emph{not} split. However
the choice of $H$ needed to achieve this may be somewhat delicate: if $H$
is too `small', the structure of the quotient $P_{n+1}(\rp)/H$ remains
complicated; on the other hand, if $H$ is too `large', we lose too much
information and cannot reach a conclusion. 
In \resec{proofthm}, we first show that we may reduce to the case $m=1$, and then go on to prove \reth{nosplitg} using the analysis of \resec{setup}. As we shall see in \resec{proofthm}, it suffices to take $H$ to be Abelianisation of $\ker{p_{\ast}}$, in which case the quotient $\ker{p_{\ast}}/H$ is a free Abelian group. We will then deduce \reth{complete}.

\subsection*{Acknowledgements}

This work took place during the visit of the second author to the
Departmento de Mate\-m\'atica do IME-Universidade de S\~ao Paulo during
the periods~14\up{th}~--~29\up{th}~April~2008, 18\up{th}~July~--~8\up{th} August~2008 and 31\up{st}~October~--~10\up{th}~November~2008, and of the visit of the first author to the Laboratoire de Math\'ematiques Nicolas Oresme, Universit\'e de Caen during the period 21\up{st}~November~--~21\up{st}~December~2008. This work was supported by the international Cooperation USP/Cofecub project n\up{o} 105/06, and by the CNRS/CNPq project n\up{o}~21119.

\section{A presentation of \texorpdfstring{$P_n(M)$}{P(M,n)}}\label{sec:pres}

Let $M=M_{g}$ be a compact, connected, non-orientable surface without boundary of genus $g\geqslant 2$.
If $n\in\N$ and $\dt\subseteq M$ is a topological disc, the
inclusion induces a homomorphism $\map
{\iota}{B_n(\dt)}[B_n(M)]$. If $\beta\in B_n(\dt)$ then we shall
denote its image $\iota(\beta)$ simply by $\beta$. For $1\leqslant i<j\leq
n$, we consider the following elements of $P_n(M)$:
\begin{equation*}\label{eq:genspn} 
B_{i,j}= \sigma_i^{-1}\cdots \sigma_{j-2}^{-1} \sigma_{j-1}^2 \sigma_{j-2} \cdots \sigma_i, 
\end{equation*} 
where $\sigma_1,\ldots, \sigma_{n-1}$ are the standard generators of
$B_n(\dt)$. The geometric braid corresponding to $B_{i,j}$ takes the
$i\up{th}$ string once around the $j\up{th}$ string in the positive
sense, with all other strings remaining vertical. For each $1\leq
k\leqslant n$ and $1\leqslant l\leqslant g$, we define a generator $\rho_{k,l}$ which is represented
geometrically by a loop based at the $k\up{th}$ point and which goes
round the $l\up{th}$ twisted handle. These elements are illustrated in
Figure~\ref{fig:gens} that represents $M$ minus a disc.


\begin{figure}[h]
\centering{\includegraphics[scale=0.6,angle=270]{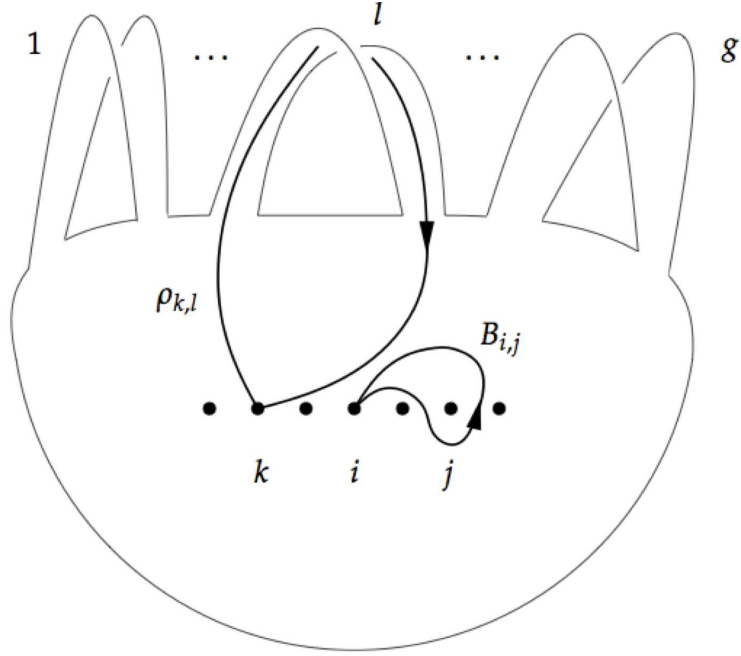}}
\vspace*{-2cm}
\caption{The generators $B_{i,j}$ and $\rho_{k,l}$ of $P_n(M)$, represented geometrically by loops lying in $M$ minus a disc.}
\label{fig:gens}
\end{figure}

A presentation of the braid groups of non-orientable surfaces was originally given by Scott~\cite{S}.  Other presentations were later obtained in~\cite{Be,GM}. In the following theorem, we derive another presentation of $P_n(M)$.

\begin{thm}\label{th:basicpres}
Let $M$ be a compact, connected, non-orientable surface without boundary of genus $g\geqslant 2$, and let $n\in\N$. The following constitutes a presentation of the pure braid group $P_n(M)$:

\begin{enumerate}
\item[\underline{\textbf{generators:}}] $B_{i,j}$, $1\leqslant i<j\leqslant n$, and $\rho_{k,l}$, where $1\leqslant k\leqslant n$ and $1\leqslant l\leqslant g$.
\item[\underline{\textbf{relations:}}]\mbox{}
\begin{enumerate}[(a)]
\item\label{it:rel1} the Artin relations between the $B_{i,j}$ emanating from those of $P_n(\dt)$:
\begin{equation}\label{eq:bijrel}
B_{r,s}B_{i,j}B_{r,s}^{-1}=
\begin{cases}
B_{i,j} & \text{if $i<r<s<j$ or $r<s<i<j$}\\
B_{i,j}^{-1} B_{r,j}^{-1}  B_{i,j} B_{r,j} B_{i,j} & \text{if $r<i=s<j$}\\
B_{s,j}^{-1} B_{i,j} B_{s,j} & \text{if $i=r<s<j$}\\
B_{s,j}^{-1}B_{r,j}^{-1} B_{s,j} B_{r,j} B_{i,j} B_{r,j}^{-1} B_{s,j}^{-1} B_{r,j} B_{s,j}   & \text{if $r<i<s<j$.}
\end{cases}
\end{equation}
\item\label{it:rel2} for all $1\leqslant i<j\leqslant n$ and $1\leqslant k,l \leqslant g$,
\begin{equation}\label{eq:relsrhoik}
\rho_{i,k}\rho_{j,l}\rho_{i,k}^{-1}=
\begin{cases}
\rho_{j,l} & \text{if $k<l$}\\
\rho_{j,k}^{-1} B_{i,j}^{-1}  \rho_{j,k}^2 & \text{if $k=l$}\\
\rho_{j,k}^{-1} B_{i,j}^{-1}\rho_{j,k} B_{i,j}^{-1} \rho_{j,l}
B_{i,j} \rho_{j,k}^{-1} B_{i,j} \rho_{j,k} & \text{if $k>l$}\\
\end{cases}
\end{equation}

\item\label{it:rel3} for all $1\leqslant i\leqslant n$, the `surface relations' $\displaystyle \prod_{l=1}^g \rho_{i,l}^2= B_{1,i}\cdots B_{i-1,i} B_{i,i+1} \cdots B_{i,n}$.

\item\label{it:rel4} for all $1\leqslant i<j\leqslant n$, $1\leqslant k\leqslant n$, $k\neq j$, and $1\leqslant l\leqslant g$,
\begin{equation}\label{eq:relspn}
\rho_{k,l} B_{i,j}\rho_{k,l}^{-1}=
\begin{cases}
B_{i,j} & \text{if $k<i$ or $j<k$}\\
\rho_{j,l}^{-1} B_{i,j}^{-1} \rho_{j,l} & \text{if $k=i$}\\
\rho_{j,l}^{-1} B_{k,j}^{-1} \rho_{j,l} B_{k,j}^{-1} B_{i,j} B_{k,j} \rho_{j,l}^{-1} B_{k,j} \rho_{j,l} & \text{if $i<k<j$}.
\end{cases}
\end{equation}
\end{enumerate}
\end{enumerate}
\end{thm}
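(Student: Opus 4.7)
My plan is to prove the presentation by induction on $n \geqslant 1$, using the Fadell-Neuwirth short exact sequence \reqref{split} with $m=1$, which holds for all $n\geqslant 1$ because $M$ is non-orientable of genus $g\geqslant 2$ and so is neither $\St$ nor $\rp$. The base case $n=1$ reduces to $P_1(M)=\pi_1(M)$, which admits the classical surface presentation $\setang{\rho_{1,1},\ldots,\rho_{1,g}}{\prod_{l=1}^{g}\rho_{1,l}^{2}}$. This matches the claim: relations (a), (b) and (d) are vacuous, and (c) for $i=1$ reduces to $\prod_{l}\rho_{1,l}^{2}=1$ since the right-hand side of (c) is the empty product when $n=1$.

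For the inductive step, assume the presentation for $P_{n}(M)$ and consider the exact sequence
\begin{equation*}
1 \to \pi_{1}(M\setminus\brak{x_1,\ldots,x_n},x_{n+1}) \to P_{n+1}(M) \stackrel{p_{\ast}}{\to} P_{n}(M)\to 1.
\end{equation*}
The kernel $F:=\pi_{1}(M\setminus\brak{x_1,\ldots,x_n})$ is free, and is naturally generated by the loops $B_{i,n+1}$ ($1\leqslant i\leqslant n$) encircling the punctures and by $\rho_{n+1,l}$ ($1\leqslant l\leqslant g$) running around the $l\th$ twisted handle. Computing $\pi_{1}$ from the surface presentation of $M$ minus $n$ points shows that $F$ is free of rank $n+g-1$, and that the single defining relation among these generators is precisely relation (c) for $i=n+1$. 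I then apply the standard procedure for presentations of a group extension: a presentation of $P_{n+1}(M)$ is obtained from the generators of $P_{n}(M)$, lifted via a set-theoretic section, together with the generators of $F$; the relations split into three types, namely (i) the relations of $F$, (ii) lifts of the inductive relations (a)-(d) at level $n$, and (iii) conjugation relations $g f g^{-1}=w_{g,f}$ expressing the action of each $P_n(M)$-generator $g$ on each $F$-generator $f$.

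The main technical obstacle lies in step (iii): computing the conjugation words $w_{g,f}$ in $F$ and checking that they coincide with the instances of relations (a)-(d) involving at least one index equal to $n+1$. Geometrically, each $w_{g,f}$ is obtained by isotoping the loop $f$ along the braid $g$ in $M\setminus\brak{x_1,\ldots,x_n}$ and rewriting the result in the chosen generating set. The routine branches (non-crossing or nested configurations) yield either commutation or simple conjugation, accounting for the easy cases of \reqref{bijrel}, \reqref{relsrhoik} and \reqref{relspn}. The delicate branches -- $r<i=s<j$ in (a), $k=l$ and $k>l$ in (b), and $i<k<j$ in (d) -- encode the non-trivial twisting produced when a $\rho$-loop is pulled across a Möbius handle or a $B$-loop crosses a shared endpoint, and are responsible for the quadratic conjugation formulae displayed.

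Rather than redo every such computation from scratch, I would instead start from one of the existing presentations of $B_n(M)$ due to Scott~\cite{S}, Bellingeri~\cite{Be} or González-Meneses~\cite{GM}, restrict to the pure subgroup $P_{n}(M)$ using the Reidemeister-Schreier rewriting for the epimorphism $B_n(M)\to \sn$, and then apply Tietze transformations to bring the resulting presentation into the desired form in terms of the geometric generators $B_{i,j}$ and $\rho_{k,l}$. The inductive framework above serves as the skeleton to organise and verify these transformations, and makes it straightforward to check that the final list of relations is complete: the inductive relations account for all instances with indices at most $n$, while steps (i) and (iii) account exactly for the instances involving the index $n+1$.
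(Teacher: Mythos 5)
Your skeleton is exactly the paper's: induction on $n$ via the Fadell--Neuwirth sequence with $m=1$, the same one-relator presentation of the free kernel on $\brak{B_{1,n+1},\ldots,B_{n,n+1},\rho_{n+1,1},\ldots,\rho_{n+1,g}}$ with relation (c) for $i=n+1$, and the standard three classes of relations for an extension. However, two things that constitute the actual content of the proof are missing. First, you never compute the conjugation words $w_{g,f}$: the paper writes them all out explicitly (its relations (i)--(iv) in the inductive step), and these formulae are precisely the instances of \reqref{bijrel}, \reqref{relsrhoik} and \reqref{relspn} with an index equal to $n+1$; asserting that the "delicate branches" encode the right twisting is a description of what must be proved, not a proof. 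Your fallback — Reidemeister--Schreier on $B_n(M)\to \sn$ starting from Scott, Bellingeri or Gonz\'alez-Meneses followed by Tietze transformations — is a genuinely different and much heavier route (the subgroup has index $n!$, so the raw rewritten presentation is enormous), and you do not carry it out either; the paper avoids it entirely by doing the geometric conjugation computations directly, generalising those of~\cite{GG7}.

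Second, your class (ii) is stated incorrectly: the relations of $P_n(M)$ do \emph{not} all lift verbatim. The standard extension procedure requires you to evaluate each level-$n$ relator on the coset representatives and express the resulting element of the kernel as a word in the kernel generators. For relations (a), (b), (d) this word is trivial, but for the surface relations it is $B_{i,n+1}$, so the lifted relation reads $\prod_{l=1}^g \rho_{i,l}^2 = B_{1,i}\cdots B_{i-1,i}B_{i,i+1}\cdots B_{i,n}B_{i,n+1}$. This correction term is exactly what combines with the single kernel relation to produce the full set of surface relations (c) at level $n+1$; without it the induction does not close up.
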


\begin{proof}
We apply induction and standard results concerning the presentation of an extension (see~\cite[Theorem~1, Chapter~13]{J}). The proof generalises that of~\cite{GG7} for $\rp$, and is similar in spirit to that of~\cite{S}.

First note that the given presentation is correct for $n=1$ ($P_1(M)= \pi_1(M)$ has a presentation $\setangl{\rho_{1,1}, \dots, \rho_{1,g}}{\prod_{l=1}^g \rho_{1,l}^2=1}$). So let $n\geqslant 1$, and suppose that $P_n(M)$ has the given presentation. Taking $m=1$ in \reqref{split}, we have a short exact sequence:
\begin{equation*}
1 \to \pi_1\left(M\setminus\brak{x_1, \ldots, x_n}, x_{n+1}\right) \to P_{n+1}(M) \stackrel{p_{\ast}}{\to} P_n(M) \to 1.
\end{equation*}
In order to retain the symmetry of the presentation, we take the free group $\ker{p_{\ast}}$ to have the following one-relator presentation:
\begin{equation*}
\setangr{\rho_{n+1,1}, \ldots \rho_{n+1,g}, B_{1,n+1}, \ldots, B_{n,n+1}}{\prod_{l=1}^g \rho_{n+1,l}^2= B_{1,n+1} \cdots B_{n,n+1}}.
\end{equation*}
Together with these generators of $\ker{p_{\ast}}$, the elements $B_{i,j}$, $1\leqslant i<j\leqslant n$, and $\rho_{k,l}$,  $1\leqslant k\leqslant n$ and $1 \leqslant l \leqslant g$, of $P_{n+1}(M)$ (which are coset representatives of the generators of $P_n(M)$)  form the given generating set of $P_{n+1}(M)$. 

There are three classes of relations of $P_{n+1}(M)$ which are
obtained as follows. The first consists of the single relation
$\prod_{l=1}^g \rho_{n+1,l}^2= B_{1,n+1} \cdots B_{n,n+1}$ of $\ker{p_{\ast}}$. The
second class is obtained by rewriting the relators of the quotient in
terms of the coset representatives, and expressing the corresponding
element as a word in the generators of $\ker{p_{\ast}}$. In this way,
all of the relations of $P_n(M)$ lift directly to relations of
$P_{n+1}(M)$, with the exception of the surface relations which
become 
\begin{equation*}
\prod_{l=1}^g \rho_{i,l}^2 = B_{1,i}\cdots B_{i-1,i} B_{i,i+1} \cdots B_{i,n} B_{i,n+1} \quad\text{for all $1\leqslant i\leqslant n$.}
\end{equation*}
Along with the relation of
$\ker{p_{\ast}}$, we obtain the complete set of surface relations
(relations~(\ref{it:rel3})) for $P_{n+1}(M)$.

The third class of relations is obtained by rewriting the conjugates
of the generators of $\ker{p_{\ast}}$ by the coset representatives in
terms of the generators of $\ker{p_{\ast}}$:
\begin{enumerate}[(i)]
\item\label{it:relex1} For all $1\leqslant i<j\leqslant n$ and $1\leqslant l\leqslant n$,
\begin{equation*}
B_{i,j} B_{l,n+1} B_{i,j}^{-1}= 
\begin{cases}
B_{l,n+1} & \text{if $l<i$ or $j<l$}\\
B_{l,n+1}^{-1} B_{i,n+1}^{-1}  B_{l,n+1} B_{i,n+1} B_{l,n+1} & \text{if $l=j$}\\
B_{j,n+1}^{-1} B_{l,n+1} B_{j,n+1} & \text{if $l=i$}\\
B_{j,n+1}^{-1}B_{i,n+1}^{-1} B_{j,n+1} B_{i,n+1} B_{l,n+1} B_{i,n+1}^{-1} B_{j,n+1}^{-1} B_{i,n+1} B_{j,n+1}   & \text{if $i<l<j$.}
\end{cases}
\end{equation*}
\item\label{it:relex2} $B_{i,j} \rho_{n+1,l} B_{i,j}^{-1}= \rho_{n+1,l}$ for all $1\leqslant i<j\leqslant n$ and $1 \leqslant l \leqslant g$.

\item\label{it:relex3} for all $1\leqslant i\leqslant n$ and $1 \leqslant k,l \leqslant g$,
\begin{equation*}
\rho_{i,k} \rho_{n+1,l} \rho_{i,k}^{-1}= 
\begin{cases}
\rho_{n+1,l} & \text{if $k<l$}\\
\rho_{n+1,k}^{-1} B_{i,n+1}^{-1}  \rho_{n+1,k}^2 & \text{if $k=l$}\\
\rho_{n+1,k}^{-1} B_{i,n+1}^{-1}\rho_{n+1,k} B_{i,n+1}^{-1} \rho_{n+1,l}
B_{i,n+1} \rho_{n+1,k}^{-1} B_{i,n+1} \rho_{n+1,k} & \text{if $k>l$.}
\end{cases}
\end{equation*}

\item\label{it:relex4} For all $1\leqslant i,k\leqslant n$ and $1 \leqslant l \leqslant g$,
\begin{equation*}
\rho_{k,l} B_{i,n+1}\rho_{k,l}^{-1}=
\begin{cases}
B_{i,n+1} & \text{if $k<i$}\\
\rho_{n+1,l}^{-1} B_{i,n+1}^{-1} \rho_{n+1,l} & \text{if $k=i$}\\
\rho_{n+1,l}^{-1} B_{k,n+1}^{-1} \rho_{n+1,l} B_{k,n+1}^{-1} B_{i,n+1} B_{k,n+1} \rho_{n+1,l}^{-1} B_{k,n+1} \rho_{n+1,l} & \text{if $i<k$.}
\end{cases}
\end{equation*}
\end{enumerate}
Then relations~(\ref{it:rel1}) for $P_{n+1}(M)$ are obtained from relations~(\ref{it:rel1}) for $P_{n}(M)$ and relations~(\ref{it:relex1}), relations~(\ref{it:rel2}) for $P_{n+1}(M)$ are obtained from relations~(\ref{it:rel2}) for $P_{n}(M)$ and relations~(\ref{it:relex3}), and relations~(\ref{it:rel4}) for $P_{n+1}(M)$ are obtained from relations~(\ref{it:rel4}) for $P_{n}(M)$, and relations~(\ref{it:relex2}) and~(\ref{it:relex4}).
\end{proof}

\section{Analysis of the case \texorpdfstring{$P_{n+1}(M_g)\to P_n(M_g)$}{P(M,n+1)-->P(M,n)}, \texorpdfstring{$n\geqslant 2$}{n>=2}}\label{sec:setup}


For the whole of this section, we suppose that $g\geqslant 3$ and $n\geqslant 2$. By \reth{basicpres}, $P_n(M_g)$ is generated by the union of the $B_{i,j}$, $1\leqslant i<j\leqslant n$, and of the $\rho_{k,l}$, $1\leqslant k\leqslant n$, $1\leqslant l\leqslant g$.
Let us consider the homomorphism $\map{p_{\ast}}{P_{n+1}(M_g)}[P_n(M_g)]$. In this section, we suppose that $p_{\ast}$ admits a section, denoted by $s_{\ast}$. Applying~\reqref{split}, we thus have a split short exact sequence 
\begin{equation}\label{eq:fnmgnp2}
\xymatrix{%
1 \ar[r] & K \ar[r] & P_{n+1}(M_g) \ar@<4pt>[r]^-{p_{\ast}}  & P_n(M_g) \ar@<4pt>@{-->}[l]^-{s_{\ast}} \ar[r] & 1,}
\end{equation}
where $K=\ker{p_{\ast}}=\pi_1(M_g\setminus \brak{x_1,\ldots,x_n},x_{n+1})$ is a free group of rank $n+g-1$, generated by $\brak{B_{1,n+1}, \ldots, B_{n,n+1}, \rho_{n+1,1},\ldots,\rho_{n+1,g}}$, and subject to the relation 
\begin{equation*}
B_{1,n+1} \cdots B_{n,n+1} =\rho_{n+1,1}^2\cdots\rho_{n+1,g}^2.
\end{equation*}
Let $H=[K,K]$ be the commutator subgroup  of $K$. Then $K/H$ is a free Abelian group of rank $n+g-1$. In what follows, we shall not distinguish notationally between the elements of $K$ and those of $K/H$. The quotient group $K/H$ thus has a basis 
\begin{equation}\label{eq:basisb}
\mathcal{B}=\brak{B_{1,n+1},\ldots, B_{n-1,n+1},\rho_{n+1,1}, \ldots,\rho_{n+1,g}},
\end{equation}
and the relation
\begin{equation}\label{eq:surfacerelgen}
B_{n,n+1}= B_{1,n+1}^{-1}\cdots B_{n-1,n+1}^{-1}\rho_{n+1,1}^2\cdots\rho_{n+1,g}^2
\end{equation}
holds in the Abelian group $K/H$. Since $H$ is normal in $P_{n+1}(M_g)$ and $p_{\ast}$ admits a section,  it follows from \req{fnmgnp2} that we have a split short exact sequence 
\begin{equation*}
\xymatrix{%
1 \ar[r] & K/H \ar[r] & P_{n+1}(M_g)/H \ar@<4pt>[r]^-{\overline{p}}  & P_n(M_g) \ar@<3pt>@{-->}[l]^-{\overline{s}} \ar[r] & 1,}
\end{equation*}
where $\overline{p}$ is the homomorphism induced by $p_{\ast}$, and $\overline{s}$ is the induced section.

Consider the subset
\begin{equation*}
\Gamma=\setl{B_{i,j}, \rho_{k,l}}{1\leqslant i<j\leqslant n,\, 1\leqslant k\leqslant n,\, 1\leqslant l\leqslant g}
\end{equation*}
of $P_{n+1}(M_g)/H$. If $g\in \Gamma$ then $\overline{p}(g)=g\in P_{n}(M_{g})$, and so $g^{-1}\ldotp \overline{s}(\overline{p}(g))\in \ker{\overline{p}}=K/H$. Then the integer coefficients $\indalp ijr,\indbet ijq, \indgam klr, \indeta klq$, where $1\leqslant r\leqslant g$ and $1\leqslant q\leqslant n-1$, are (uniquely) defined by the equations:
\begin{equation}\label{eq:defcoeffs}
\left\{\begin{aligned}
\overline{s}\left(B_{i,j}\right) &= B_{i,j} \rho_{n+1,1}^{\indalp ij1} \cdots \rho_{n+1,g}^{\indalp ijg} B_{1,n+1}^{\indbet ij1} \cdots B_{n-1,n+1}^{\indbet ij{n-1}}\\
\overline{s}\left(\rho_{k,l}\right) &= \rho_{k,l} \rho_{n+1,1}^{\indgam kl1} \cdots \rho_{n+1,g}^{\indgam klg} B_{1,n+1}^{\indeta kl1} \cdots B_{n-1,n+1}^{\indeta kl{n-1}}.
\end{aligned}\right.
\end{equation}
There is an equation for each element of $\Gamma$. Most of the elements of $\Gamma$ commute with the elements of the basis $\mathcal{B}$ of $K/H$ given in \req{basisb}. We record the list of conjugates of such elements for later use. In what follows, $1\leqslant i<j \leqslant n$, $1\leqslant k,m\leqslant n$ and $1\leqslant l,r\leqslant g$. In $K/H$, we have
\begin{equation*}
B_{i,j} B_{m,n+1} B_{i,j}^{-1}=B_{m,n+1}
\end{equation*}
(this follows from \req{bijrel} and the fact that the elements $B_{q,n+1}$, $1\leqslant q\leqslant n$, belong to $K/H$ and so commute pairwise), and
\begin{equation*}
B_{i,j} \rho_{n+1,l} B_{i,j}^{-1}=\rho_{n+1,l}
\end{equation*}
by \req{relspn}. Thus $B_{i,j}$ belongs to the centraliser of $K/H$ in $P_{n+1}(M_g)/H$. Also by \req{relspn}, we have
\begin{equation*}
\rho_{k,l} B_{m,n+1} \rho_{k,l}^{-1}=
\begin{cases}
B_{m,n+1} & \!\!\!\!\text{if $k<m$}\\
\rho_{n+1,l}^{-1} B_{m,n+1}^{-1} \rho_{n+1,l}=B_{m,n+1}^{-1} & \!\!\!\!\text{if $k=m$}\\
\rho_{n+1,l}^{-1} B_{k,n+1}^{-1} \rho_{n+1,l} B_{k,n+1}^{-1} B_{m,n+1} B_{k,n+1} \rho_{n+1,l}^{-1} B_{k,n+1} \rho_{n+1,l}\!=\! B_{m,n+1}
& \!\!\!\!\text{if $k>m$,}
\end{cases}
\end{equation*}
so
\begin{equation}
\rho_{k,l} B_{m,n+1} \rho_{k,l}^{-1} = B_{m,n+1}^{1-2\delta_{k,m}}, \label{eq:commbijgen}
\end{equation}
where $\delta_{\cdot,\cdot}$ is the Kronecker delta. By \req{relsrhoik}, we have
\begin{equation*}
\rho_{k,l} \rho_{n+1,r} \rho_{k,l}^{-1}=
\begin{cases}
\rho_{n+1,r} & \text{if $l<r$}\\
\rho_{n+1,l}^{-1} B_{k,n+1}^{-1}  \rho_{n+1,l}^2= \rho_{n+1,l} B_{k,n+1}^{-1} & \text{if $l=r$}\\
\rho_{n+1,l}^{-1} B_{k,n+1}^{-1}\rho_{n+1,l} B_{k,n+1}^{-1} \rho_{n+1,r}
B_{k,n+1} \rho_{n+1,l}^{-1} B_{k,n+1} \rho_{n+1,l} =\rho_{n+1,r} & \text{if $l>r$,}
\end{cases}
\end{equation*}
so
\begin{equation}
\rho_{k,l} \rho_{n+1,r} \rho_{k,l}^{-1}=\rho_{n+1,r} B_{k,n+1}^{-\delta_{l,r}}. \label{eq:commrr}
\end{equation}
Combining equations~\reqref{commbijgen} and~\reqref{commrr}, we obtain
\begin{equation*}
\rho_{k,r}^2 \rho_{n+1,r} \rho_{k,r}^{-2} = \rho_{k,r} \rho_{n+1,r} B_{k,n+1}^{-1} \rho_{k,r}^{-1}= \rho_{n+1,r} B_{k,n+1}^{-1} B_{k,n+1} = \rho_{n+1,r},
\end{equation*}
so
\begin{equation}\label{eq:commrhokr1}
\rho_{k,r} \rho_{n+1,r} \rho_{k,r}^{-1}=\rho_{k,r}^{-1} \rho_{n+1,r} \rho_{k,r}.
\end{equation}
Furthermore, by \req{commbijgen}, $\rho_{k,l}^2$ commutes with $B_{m,n+1}$, and therefore
\begin{equation}\label{eq:commrhokr2}
\rho_{k,l} B_{m,n+1} \rho_{k,l}^{-1}=\rho_{k,l}^{-1} B_{m,n+1} \rho_{k,l}.
\end{equation}
Hence $\rho_{k,l}^2$ also belongs to the centraliser of $K/H$ in $P_{n+1}(M_g)/H$. From equations~\reqref{commbijgen} and~\reqref{commrr}, we obtain the following relations:
\begin{equation}\label{eq:rhojlcomm1}
\rho_{n+1,1}^{\indgam ik1} \cdots \rho_{n+1,g}^{\indgam ikg} \cdot
\rho_{j,l}=\rho_{j,l} \cdot B_{j,n+1}^{-\indgam ikl} \rho_{n+1,1}^{\indgam ik1} \cdots \rho_{n+1,g}^{\indgam ikg}\quad \text{for all $1\leqslant j\leqslant n$},
\end{equation}
and 
\begin{equation*}
B_{1,n+1}^{\indeta ik1} \cdots B_{n-1,n+1}^{\indeta ik{n-1}}
\rho_{j,l}=
\begin{cases}
\rho_{j,l} B_{j,n+1}^{-2\indeta ikj}  B_{1,n+1}^{\indeta ik1} \cdots B_{n-1,n+1}^{\indeta ik{n-1}} & \text{if $1\leqslant j\leqslant n-1$}\\
\rho_{j,l} B_{1,n+1}^{\indeta ik1} \cdots B_{n-1,n+1}^{\indeta ik{n-1}} & \text{if $j=n$.}
\end{cases}
\end{equation*}
Setting $\indeta ikn=0$ for all $1\leq i\leq n$ and $1\leq k\leq g$ yields:
\begin{equation}\label{eq:rhojlcomm2}
B_{1,n+1}^{\indeta ik1} \cdots B_{n-1,n+1}^{\indeta ik{n-1}}\cdot 
\rho_{j,l}= \rho_{j,l} \cdot B_{j,n+1}^{-2\indeta ikj}  B_{1,n+1}^{\indeta ik1} \cdots B_{n-1,n+1}^{\indeta ik{n-1}} \quad \text{for all $1\leqslant j\leqslant n$}.
\end{equation}
Equations~\reqref{rhojlcomm1} and~\reqref{rhojlcomm2} will be employed repeatedly in the ensuing calculations.

We now investigate the images under $\overline{s}$ of some of the relations~(\ref{it:rel2})--(\ref{it:rel4}) of \reth{basicpres} (it turns out that the analysis of the other relations, including~(\ref{it:rel1}), will not be necessary for our purposes).


\begin{enumerate}[(a)]

\item Let $1\leqslant i<j\leqslant n$ and $1\leqslant k,l \leqslant g$. We examine the three possible cases of \req{defcoeffs} (relation~(\ref{it:rel2}) of \reth{basicpres}).
\begin{enumerate}[(i)]
\item $k<l$: then $\rho_{i,k}\rho_{j,l}=\rho_{j,l}\rho_{i,k}$ in $P_n(M_g)$. The respective images under $\overline{s}$ are:
\begin{align*}
\overline{s}\left(\rho_{i,k}\rho_{j,l}\right)=&
\rho_{i,k} \rho_{n+1,1}^{\indgam ik1} \cdots \rho_{n+1,g}^{\indgam ikg} B_{1,n+1}^{\indeta ik1} \cdots B_{n-1,n+1}^{\indeta ik{n-1}}
\rho_{j,l} \rho_{n+1,1}^{\indgam jl1} \cdots \rho_{n+1,g}^{\indgam jlg} B_{1,n+1}^{\indeta jl1} \cdots B_{n-1,n+1}^{\indeta jl{n-1}}\\
=& \rho_{i,k} \rho_{j,l} B_{j,n+1}^{-\indgam ikl-2\indeta ikj}
\rho_{n+1,1}^{\indgam ik1+\indgam jl1} \cdots \rho_{n+1,g}^{\indgam ikg+\indgam jlg}  B_{1,n+1}^{\indeta ik1+\indeta jl1} \cdots B_{n-1,n+1}^{\indeta ik{n-1}+\indeta jl{n-1}}, 
\end{align*}
and 
\begin{align*}
\overline{s}\left(\rho_{j,l}\rho_{i,k}\right)=&
\rho_{j,l} \rho_{n+1,1}^{\indgam jl1} \cdots \rho_{n+1,g}^{\indgam jlg} B_{1,n+1}^{\indeta jl1} \cdots B_{n-1,n+1}^{\indeta jl{n-1}}
\rho_{i,k} \rho_{n+1,1}^{\indgam ik1} \cdots \rho_{n+1,g}^{\indgam ikg} B_{1,n+1}^{\indeta ik1} \cdots B_{n-1,n+1}^{\indeta ik{n-1}}\\
=& \rho_{j,l} \rho_{i,k} B_{i,n+1}^{-\indgam jlk-2\indeta jli}
\rho_{n+1,1}^{\indgam jl1+\indgam ik1} \cdots \rho_{n+1,g}^{\indgam jlg+\indgam ikg} B_{1,n+1}^{\indeta jl1+\indeta ik1} \cdots B_{n-1,n+1}^{\indeta jl{n-1}+\indeta ik{n-1}}.
\end{align*}
The relation $\rho_{i,k}\rho_{j,l}=\rho_{j,l}\rho_{i,k}$ in $P_{n+1}(M_g)$ implies that
$B_{j,n+1}^{-\indgam ikl-2\indeta ikj}=B_{i,n+1}^{-\indgam jlk-2\indeta jli}$.
Comparing coefficients of the elements of $\mathcal{B}$ in $K/H$ (cf.\ \req{basisb}), if $j<n$, we have
\begin{equation}\label{eq:relbcommi}
\left\{\begin{aligned}
&\indgam jlk+2\indeta jli=0 \quad \text{and}\\
&\indgam ikl+2\indeta ikj=0, 
\end{aligned}\right.
\end{equation}
while if $j=n$, applying \req{surfacerelgen} yields
\begin{equation*}
B_{i,n+1}^{\indgam nlk+2\indeta nli}= B_{n,n+1}^{\indgam ikl+2\indeta ikn}= B_{1,n+1}^{-(\indgam ikl+2\indeta ikn)}\cdots B_{n-1,n+1}^{-(\indgam ikl+2\indeta ikn)} \rho_{n+1,1}^{2(\indgam ikl+2\indeta ikn)} \cdots \rho_{n+1,g}^{2(\indgam ikl+2\indeta ikn)},
\end{equation*}
and thus \req{relbcommi} also holds for $j=n$. 
So for all $1\leqslant i<j\leqslant n$ and $1\leqslant k<l \leqslant g$, 
\begin{gather}
\indgam jlk+2\indeta jli=0 \quad \text{and}\label{eq:gamjlk}\\
\indgam ikl+2\indeta ikj=0.\label{eq:etaikj}
\end{gather}

\item $k=l$: then $\rho_{i,k}\rho_{j,k}\rho_{i,k}^{-1}=\rho_{j,k}^{-1} B_{i,j}^{-1}  \rho_{j,k}^2$ in $P_n(M_g)$ for all $1\leqslant i<j\leqslant n$ and $1\leqslant k\leqslant g$. The respective images under $\overline{s}$ are:
\begin{align*}
\overline{s}\left(\rho_{i,k}\rho_{j,k}\rho_{i,k}^{-1}\right)=&
\rho_{i,k} \rho_{n+1,1}^{\indgam ik1} \cdots \rho_{n+1,g}^{\indgam ikg} B_{1,n+1}^{\indeta ik1} \cdots B_{n-1,n+1}^{\indeta ik{n-1}}
\rho_{j,k} \rho_{n+1,1}^{\indgam jk1} \cdots \rho_{n+1,g}^{\indgam jkg} B_{1,n+1}^{\indeta jk1} \cdots B_{n-1,n+1}^{\indeta jk{n-1}}\cdot\\
& B_{n-1,n+1}^{-\indeta ik{n-1}}\cdots B_{1,n+1}^{-\indeta ik1} \rho_{n+1,g}^{-\indgam ikg} \cdots \rho_{n+1,1}^{-\indgam ik1} \rho_{i,k}^{-1}\\
=& \rho_{i,k} \rho_{j,k} B_{j,n+1}^{-\indgam ikk}
\rho_{n+1,1}^{\indgam ik1+\indgam jk1} \cdots \rho_{n+1,g}^{\indgam ikg+\indgam jkg} B_{j,n+1}^{-2\indeta ikj} B_{1,n+1}^{\indeta ik1+\indeta jk1} \cdots B_{n-1,n+1}^{\indeta ik{n-1}+\indeta jk{n-1}}\cdot\\
& \rho_{i,k}^{-1} B_{i,n+1}^{2\indeta iki}
B_{n-1,n+1}^{-\indeta ik{n-1}}\cdots B_{1,n+1}^{-\indeta ik1}  B_{i,n+1}^{\indgam ikk} \rho_{n+1,g}^{-\indgam ikg} \cdots \rho_{n+1,1}^{-\indgam ik1}\\
=& \rho_{i,k} \rho_{j,k} \rho_{i,k}^{-1} B_{j,n+1}^{-\indgam ikk}
B_{i,n+1}^{-(\indgam ikk+\indgam jkk)}
\rho_{n+1,1}^{\indgam ik1+\indgam jk1} \cdots \rho_{n+1,g}^{\indgam ikg+\indgam jkg} 
B_{j,n+1}^{-2\indeta ikj} 
B_{i,n+1}^{-2(\indeta iki+\indeta jki)}\cdot\\
&  B_{1,n+1}^{\indeta ik1+\indeta jk1} \cdots B_{n-1,n+1}^{\indeta ik{n-1}+\indeta jk{n-1}} B_{i,n+1}^{2\indeta iki}
B_{n-1,n+1}^{-\indeta ik{n-1}}\cdots B_{1,n+1}^{-\indeta ik1}  B_{i,n+1}^{\indgam ikk} \rho_{n+1,g}^{-\indgam ikg} \cdots \rho_{n+1,1}^{-\indgam ik1}\\
=& \rho_{i,k} \rho_{j,k} \rho_{i,k}^{-1} \rho_{n+1,1}^{\indgam jk1} \cdots \rho_{n+1,g}^{\indgam jkg} B_{1,n+1}^{\indeta jk1} \cdots B_{n-1,n+1}^{\indeta jk{n-1}}
B_{i,n+1}^{-(2\indeta jki+\indgam jkk)}
B_{j,n+1}^{-(2\indeta ikj+\indgam ikk)} 
\end{align*}
and 
\begin{align*}
\overline{s}\left(\rho_{j,k}^{-1} B_{i,j}^{-1}  \rho_{j,k}^2\right)=& B_{n-1,n+1}^{-\indeta jk{n-1}}\cdots B_{1,n+1}^{-\indeta jk1} \rho_{n+1,g}^{-\indgam jkg} \cdots \rho_{n+1,1}^{-\indgam jk1} \rho_{j,k}^{-1}\cdot\\
& B_{n-1,n+1}^{-\indbet ij{n-1}} \cdots B_{1,n+1}^{-\indbet ij1} \rho_{n+1,g}^{-\indalp ijg} \cdots \rho_{n+1,1}^{-\indalp ij1} B_{i,j}^{-1}\cdot\\
& \rho_{j,k} \rho_{n+1,1}^{\indgam jk1} \cdots \rho_{n+1,g}^{\indgam jkg} B_{1,n+1}^{\indeta jk1} \cdots B_{n-1,n+1}^{\indeta jk{n-1}}
\rho_{j,k} \rho_{n+1,1}^{\indgam jk1} \cdots \rho_{n+1,g}^{\indgam jkg} B_{1,n+1}^{\indeta jk1} \cdots B_{n-1,n+1}^{\indeta jk{n-1}}\\
= & \rho_{j,k}^{-1}B_{i,j}^{-1}
B_{j,n+1}^{2\indeta jkj} B_{n-1,n+1}^{-\indeta jk{n-1}}\cdots B_{1,n+1}^{-\indeta jk1} B_{j,n+1}^{\indgam jkk} \rho_{n+1,g}^{-\indgam jkg} \cdots \rho_{n+1,1}^{-\indgam jk1} 
B_{n-1,n+1}^{-\indbet ij{n-1}}\cdots B_{1,n+1}^{-\indbet ij1}\cdot\\
& \rho_{n+1,g}^{-\indalp ijg} \cdots \rho_{n+1,1}^{-\indalp ij1} \rho_{j,k}^2 
B_{j,n+1}^{-\indgam jkk}
\rho_{n+1,1}^{\indgam jk1} \cdots \rho_{n+1,g}^{\indgam jkg} B_{j,n+1}^{-2\indeta jkj} B_{1,n+1}^{\indeta jk1} \cdots B_{n-1,n+1}^{\indeta jk{n-1}}\cdot\\
& \rho_{n+1,1}^{\indgam jk1} \cdots \rho_{n+1,g}^{\indgam jkg} B_{1,n+1}^{\indeta jk1} \cdots B_{n-1,n+1}^{\indeta jk{n-1}}\\
= & \rho_{j,k}^{-1}B_{i,j}^{-1} \rho_{j,k}^2 
\rho_{n+1,1}^{\indgam jk1-\indalp ij1} \cdots \rho_{n+1,g}^{\indgam jkg-\indalp ijg}  B_{1,n+1}^{\indeta jk1-\indbet ij1} \cdots B_{n-1,n+1}^{\indeta jk{n-1}-\indbet ij{n-1}}.
\end{align*}
Since $\rho_{i,k}\rho_{j,k}\rho_{i,k}^{-1}=\rho_{j,k}^{-1} B_{i,j}^{-1}  \rho_{j,k}^2$ in $P_{n+1}(M_g)$, we obtain
\begin{equation}\label{eq:kequall}
B_{i,n+1}^{-(2\indeta jki+\indgam jkk)}
B_{j,n+1}^{-(2\indeta ikj+\indgam ikk)}=
\rho_{n+1,1}^{-\indalp ij1} \cdots \rho_{n+1,g}^{-\indalp ijg}  B_{1,n+1}^{-\indbet ij1} \cdots B_{n-1,n+1}^{-\indbet ij{n-1}}.
\end{equation}
If $j<n$ then all of the terms in \req{kequall} are expressed in terms of the basis $\mathcal{B}$ of $K/H$ of \req{basisb}, and so for all $1\leqslant i<j\leqslant n-1$,
\begin{gather}
\indalp ijr=0 \quad\text{for all $1\leqslant r\leqslant g$} \label{eq:alij1}\\
\indbet ijs=0 \quad\text{for all $1\leqslant s\leqslant n-1$, $s\notin\brak{i,j}$} \label{eq:betijs}\\
\indbet iji=\indgam jkk+2\indeta jki \label{eq:betiji}\\
\indbet ijj=\indgam ikk+2\indeta ikj.\label{eq:betijj}
\end{gather}
If $j=n$ then substituting for $B_{n,n+1}$ in \req{kequall} using \req{surfacerelgen} and comparing coefficients in $K/H$ of the elements of $\mathcal{B}$
yields
\begin{gather*}
2(2\indeta ikn+\indgam ikk)=\indalp inr \quad\text{for all $1\leqslant r\leqslant g$}\\
(2\indeta ikn+\indgam ikk)=-\indbet ins \quad\text{for all $1\leqslant s\leqslant n-1$, $s\neq i$}\\
2(\indeta ikn-\indeta nki) +(\indgam ikk -\indgam nkk)=-\indbet ini.
\end{gather*}
But $\indeta ikn=0$, so for all $1\leqslant i\leqslant n-1$ and $1\leqslant k\leqslant g$,
\begin{gather}
\indalp inr=2\indgam ikk \quad\text{for all $1\leqslant r\leqslant g$}\label{eq:alinr}\\
\indbet ins=-\indgam ikk \quad\text{for all $1\leqslant s\leqslant n-1$, $s\neq i$}\label{eq:betins}\\
\indbet ini=2\indeta nki +(\indgam nkk-\indgam ikk ).\label{eq:betini}
\end{gather}

\item $k>l$: then $\rho_{i,k}\rho_{j,l}\rho_{i,k}^{-1}=
\rho_{j,k}^{-1} B_{i,j}^{-1}\rho_{j,k} B_{i,j}^{-1} \rho_{j,l} B_{i,j} \rho_{j,k}^{-1} B_{i,j} \rho_{j,k}$ in $P_n(M_g)$. The respective images under $\overline{s}$ are:
\begin{align*}
\overline{s}\left(\rho_{i,k}\rho_{j,l}\rho_{i,k}^{-1}\right)=&\rho_{i,k} \rho_{n+1,1}^{\indgam ik1} \cdots \rho_{n+1,g}^{\indgam ikg} B_{1,n+1}^{\indeta ik1} \cdots B_{n-1,n+1}^{\indeta ik{n-1}}
\rho_{j,l} \rho_{n+1,1}^{\indgam jl1} \cdots \rho_{n+1,g}^{\indgam jlg} B_{1,n+1}^{\indeta jl1} \cdots B_{n-1,n+1}^{\indeta jl{n-1}}\cdot\\
& B_{n-1,n+1}^{-\indeta ik{n-1}} \cdots B_{1,n+1}^{-\indeta ik1} \rho_{n+1,g}^{-\indgam ikg}  \cdots \rho_{n+1,1}^{-\indgam ik1} \rho_{i,k}^{-1}\\
=&  \rho_{i,k} \rho_{j,l}  \rho_{i,k}^{-1} B_{j,n+1}^{-\indgam ikl-2\indeta ikj} B_{i,n+1}^{-\indgam jlk-2\indeta jli}
\rho_{n+1,1}^{\indgam jl1} \cdots \rho_{n+1,g}^{\indgam jlg} B_{1,n+1}^{\indeta jl1} \cdots B_{n-1,n+1}^{\indeta jl{n-1}} 
\end{align*}
and
\begin{align*}
\overline{s}&\left(\rho_{j,k}^{-1} B_{i,j}^{-1}\rho_{j,k} B_{i,j}^{-1} \rho_{j,l} B_{i,j} \rho_{j,k}^{-1} B_{i,j} \rho_{j,k}\right)=\\ &
B_{n-1,n+1}^{-\indeta jk{n-1}} \cdots B_{1,n+1}^{-\indeta jk1} \rho_{n+1,g}^{-\indgam jkg}  \cdots \rho_{n+1,1}^{-\indgam jk1} \rho_{j,k}^{-1}\cdot  B_{n-1,n+1}^{-\indbet ij{n-1}}\cdots B_{1,n+1}^{-\indbet ij1} \rho_{n+1,g}^{-\indalp ijg} \cdots \rho_{n+1,1}^{-\indalp ij1} B_{i,j}^{-1}\cdot\\
&  
\rho_{j,k} \rho_{n+1,1}^{\indgam jk1} \cdots \rho_{n+1,g}^{\indgam jkg} B_{1,n+1}^{\indeta jk1} \cdots B_{n-1,n+1}^{\indeta jk{n-1}}\cdot B_{n-1,n+1}^{-\indbet ij{n-1}}\cdots B_{1,n+1}^{-\indbet ij1} \rho_{n+1,g}^{-\indalp ijg} \cdots \rho_{n+1,1}^{-\indalp ij1} B_{i,j}^{-1}\cdot\\
& \rho_{j,l} \rho_{n+1,1}^{\indgam jl1} \cdots \rho_{n+1,g}^{\indgam jlg} B_{1,n+1}^{\indeta jl1} \cdots B_{n-1,n+1}^{\indeta jl{n-1}}\cdot B_{i,j} \rho_{n+1,1}^{\indalp ij1} \cdots \rho_{n+1,g}^{\indalp ijg} B_{1,n+1}^{\indbet ij1} \cdots B_{n-1,n+1}^{\indbet ij{n-1}}\cdot\\
& B_{n-1,n+1}^{-\indeta jk{n-1}} \cdots B_{1,n+1}^{-\indeta jk1} \rho_{n+1,g}^{-\indgam jkg}  \cdots \rho_{n+1,1}^{-\indgam jk1} \rho_{j,k}^{-1}\cdot B_{i,j} \rho_{n+1,1}^{\indalp ij1} \cdots \rho_{n+1,g}^{\indalp ijg} B_{1,n+1}^{\indbet ij1} \cdots B_{n-1,n+1}^{\indbet ij{n-1}}\cdot\\
& \rho_{j,k} \rho_{n+1,1}^{\indgam jk1} \cdots \rho_{n+1,g}^{\indgam jkg} B_{1,n+1}^{\indeta jk1} \cdots B_{n-1,n+1}^{\indeta jk{n-1}}\\
=& B_{n-1,n+1}^{-\indeta jk{n-1}} \cdots B_{1,n+1}^{-\indeta jk1} \rho_{n+1,g}^{-\indgam jkg}  \cdots \rho_{n+1,1}^{-\indgam jk1} \rho_{j,k}^{-1}B_{i,j}^{-1}\rho_{j,k} B_{j,n+1}^{2\indbet ijj+\indalp ijk} \cdot\\
& 
\rho_{n+1,1}^{\indgam jk1-2\indalp ij1} \cdots \rho_{n+1,g}^{\indgam jkg-2\indalp ijg} B_{1,n+1}^{\indeta jk1-2\indbet ij1} \cdots B_{n-1,n+1}^{\indeta jk{n-1}-2\indbet ij{n-1}}\cdot\\
& B_{i,j}^{-1} \rho_{j,l} \rho_{n+1,1}^{\indgam jl1} \cdots \rho_{n+1,g}^{\indgam jlg} B_{1,n+1}^{\indeta jl1} \cdots B_{n-1,n+1}^{\indeta jl{n-1}}\cdot B_{i,j} \rho_{n+1,1}^{\indalp ij1} \cdots \rho_{n+1,g}^{\indalp ijg} B_{1,n+1}^{\indbet ij1} \cdots B_{n-1,n+1}^{\indbet ij{n-1}}\cdot\\
& B_{n-1,n+1}^{-\indeta jk{n-1}} \cdots B_{1,n+1}^{-\indeta jk1} \rho_{n+1,g}^{-\indgam jkg}  \cdots \rho_{n+1,1}^{-\indgam jk1} \rho_{j,k}^{-1} B_{i,j} \rho_{j,k}
B_{j,n+1}^{-\indalp ijk-2\indbet ijj}\cdot\\
&
\rho_{n+1,1}^{\indalp ij1} \cdots \rho_{n+1,g}^{\indalp ijg} B_{1,n+1}^{\indbet ij1} \cdots B_{n-1,n+1}^{\indbet ij{n-1}}\cdot \rho_{n+1,1}^{\indgam jk1} \cdots \rho_{n+1,g}^{\indgam jkg} B_{1,n+1}^{\indeta jk1} \cdots B_{n-1,n+1}^{\indeta jk{n-1}}\\
=& \rho_{j,k}^{-1}B_{i,j}^{-1}\rho_{j,k}B_{i,j}^{-1} \rho_{j,l} B_{i,j} \rho_{j,k}^{-1} B_{i,j} \rho_{j,k}
B_{j,n+1}^{2\indalp ijl-2\indalp ijk}\cdot B_{1,n+1}^{\indeta jl1} \cdots B_{n-1,n+1}^{\indeta jl{n-1}}
\rho_{n+1,1}^{\indgam jl1} \cdots \rho_{n+1,g}^{\indgam jlg}.
\end{align*}
Since $\rho_{i,k}\rho_{j,l}\rho_{i,k}^{-1}=
\rho_{j,k}^{-1} B_{i,j}^{-1}\rho_{j,k} B_{i,j}^{-1} \rho_{j,l} B_{i,j} \rho_{j,k}^{-1} B_{i,j} \rho_{j,k}$ in $P_{n+1}(M_g)$, we see that
\begin{equation*}
B_{i,n+1}^{-\indgam jlk-2\indeta jli}=  
B_{j,n+1}^{2\indalp ijl-2\indalp ijk+\indgam ikl+2\indeta ikj}.
\end{equation*}
If $j<n$, it follows by comparing coefficients of the elements of $\mathcal{B}$ in $K/H$ that for all $1\leqslant i<j< n$ and $1\leqslant l<k\leqslant g$,
\begin{equation}\label{eq:rhocomm}
\left\{\begin{gathered}
\indgam jlk+2\indeta jli=0\\
2\indalp ijl-2\indalp ijk+\indgam ikl+2\indeta ikj=0.
\end{gathered}\right. 
\end{equation}
If $j=n$ then
\begin{align*}
B_{i,n+1}^{-\indgam nlk-2\indeta nli}=&
B_{n,n+1}^{2\indalp inl-2\indalp ink+\indgam ikl+2\indeta ikn}\\
=&  B_{1,n+1}^{-(2\indalp inl-2\indalp ink+\indgam ikl+2\indeta ikn)} \cdots B_{n-1,n+1}^{-(2\indalp inl-2\indalp ink+\indgam ikl+2\indeta ikn)}\cdot\\
& \rho_{n+1,1}^{2(2\indalp inl-2\indalp ink+\indgam ikl+2\indeta ikn)} \cdots 
\rho_{n+1,g}^{2(2\indalp inl-2\indalp ink+\indgam ikl+2\indeta ikn)},
\end{align*}
and comparing coefficients of the elements of $\mathcal{B}$ in $K/H$, we observe that equations~\reqref{rhocomm} also hold if $j=n$. So for all $1\leqslant i<j \leqslant n$ and $1\leqslant l<k\leqslant g$,
\begin{gather}
\indgam jlk+2\indeta jli=0\label{eq:etajli}\\
2\indalp ijl-2\indalp ijk+\indgam ikl+2\indeta ikj=0.\label{eq:aage}
\end{gather}
\end{enumerate}

\item Let $1\leqslant i\leqslant n$. Then $\displaystyle \prod_{l=1}^g \rho_{i,l}^2= B_{1,i}\cdots B_{i-1,i} B_{i,i+1} \cdots B_{i,n}$ in $P_n(M_g)$ by relation~(\ref{it:rel3}) of \reth{basicpres}. For $1\leqslant l\leqslant g$, note that
\begin{align*}
\overline{s}\left(\rho_{i,l}^2\right) &=\rho_{i,l} \rho_{n+1,1}^{\indgam il1} \cdots \rho_{n+1,g}^{\indgam ilg} B_{1,n+1}^{\indeta il1} \cdots B_{n-1,n+1}^{\indeta il{n-1}} \rho_{i,l} \rho_{n+1,1}^{\indgam il1} \cdots \rho_{n+1,g}^{\indgam ilg} B_{1,n+1}^{\indeta il1} \cdots B_{n-1,n+1}^{\indeta il{n-1}}\\
&= \rho_{i,l}^2
B_{i,n+1}^{-2\indeta ili-\indgam ill}
\rho_{n+1,1}^{2\indgam il1} \cdots \rho_{n+1,g}^{2\indgam ilg} B_{1,n+1}^{2\indeta il1} \cdots B_{n-1,n+1}^{2\indeta il{n-1}}.
\end{align*}
As we saw in equations~\reqref{commrhokr1} and~\reqref{commrhokr2}, $\rho_{i,l}^2$ belongs to the centraliser of $K/H$ in $P_{n+1}(M_{g})/H$, so
\begin{multline*}
\overline{s}\left(\prod_{l=1}^g \rho_{i,l}^2\right) =\\
\left( \prod_{l=1}^g \rho_{i,l}^2 \right) \!\!
\left( \prod_{l=1}^g B_{i,n+1}^{-2\sum_{l=1}^g \indeta ili-\sum_{l=1}^g \indgam ill}
\rho_{n+1,1}^{2\sum_{l=1}^g \indgam il1} \cdots \rho_{n+1,g}^{2\sum_{l=1}^g \indgam ilg} \cdot
B_{1,n+1}^{2\sum_{l=1}^g \indeta il1} \cdots B_{n-1,n+1}^{2\sum_{l=1}^g \indeta il{n-1}} \right)\!. 
\end{multline*}
Further,
\begin{align*}
\overline{s}&\left(B_{1,i}\cdots B_{i-1,i} B_{i,i+1} \cdots B_{i,n}\right) = B_{1,i}\cdots B_{i-1,i} B_{i,i+1} \cdots B_{i,n}\cdot \\ &
\prod_{l=1}^{i-1} \left(\rho_{n+1,1}^{\indalp li1} \cdots \rho_{n+1,g}^{\indalp lig} B_{1,n+1}^{\indbet li1} \cdots B_{n-1,n+1}^{\indbet li{n-1}}\right)
\prod_{l=i+1}^{n} \left(\rho_{n+1,1}^{\indalp il1} \cdots \rho_{n+1,g}^{\indalp ilg} B_{1,n+1}^{\indbet il1} \cdots B_{n-1,n+1}^{\indbet il{n-1}}\right)\\
=& B_{1,i}\cdots B_{i-1,i} B_{i,i+1} \cdots B_{i,n}\cdot\rho_{n+1,1}^{\sum_{l=1}^{i-1} \indalp li1+ \sum_{l=i+1}^{n} \indalp il1} \cdots 
\rho_{n+1,g}^{\sum_{l=1}^{i-1} \indalp lig+ \sum_{l=i+1}^{n} \indalp ilg}\cdot\\
& B_{1,n+1}^{\sum_{l=1}^{i-1}\indbet li1+ \sum_{l=i+1}^{n}\indbet il1} \cdots 
B_{n-1,n+1}^{\sum_{l=1}^{i-1}\indbet li{n-1}+ \sum_{l=i+1}^{n}\indbet il{n-1}}.
\end{align*}
Now in $P_{n+1}(M_g)/H$, $\displaystyle \prod_{l=1}^g \rho_{i,l}^2= B_{1,i}\cdots B_{i-1,i} B_{i,i+1} \cdots B_{i,n}B_{i,n+1}$, hence
\begin{align*}
B_{i,n+1}^{1-2\sum_{l=1}^g \indeta ili-\sum_{l=1}^g \indgam ill} &
\rho_{n+1,1}^{2\sum_{l=1}^g \indgam il1} \cdots \rho_{n+1,g}^{2\sum_{l=1}^g \indgam ilg} B_{1,n+1}^{2\sum_{l=1}^g \indeta il1} \cdots B_{n-1,n+1}^{2\sum_{l=1}^g \indeta il{n-1}}=\\
& \rho_{n+1,1}^{\sum_{l=1}^{i-1} \indalp li1+ \sum_{l=i+1}^{n} \indalp il1} \cdots 
\rho_{n+1,g}^{\sum_{l=1}^{i-1} \indalp lig+ \sum_{l=i+1}^{n} \indalp ilg}\cdot\\
& B_{1,n+1}^{\sum_{l=1}^{i-1}\indbet li1+ \sum_{l=i+1}^{n}\indbet il1} \cdots 
B_{n-1,n+1}^{\sum_{l=1}^{i-1}\indbet li{n-1}+ \sum_{l=i+1}^{n}\indbet il{n-1}}.
\end{align*}
Thus for all $1\leqslant i\leqslant n$,
\begin{multline}
B_{i,n+1}^{1-2\sum_{l=1}^g \indeta ili-\sum_{l=1}^g \indgam ill} =
\rho_{n+1,1}^{\sum_{l=1}^{i-1} \indalp li1+ \sum_{l=i+1}^{n} \indalp il1-2\sum_{l=1}^g \indgam il1} \cdots 
\rho_{n+1,g}^{\sum_{l=1}^{i-1} \indalp lig+ \sum_{l=i+1}^{n} \indalp ilg -2\sum_{l=1}^g \indgam ilg}\cdot\\
 B_{1,n+1}^{\sum_{l=1}^{i-1}\indbet li1+ \sum_{l=i+1}^{n}\indbet il1-2\sum_{l=1}^g \indeta il1} \cdots 
B_{n-1,n+1}^{\sum_{l=1}^{i-1}\indbet li{n-1}+ \sum_{l=i+1}^{n}\indbet il{n-1}- 2\sum_{l=1}^g \indeta il{n-1}}.\label{eq:surface}
\end{multline}
\end{enumerate}

\section{Proofs of Theorems~\ref{th:nosplitg} and~\ref{th:complete}}\label{sec:proofthm}


In this section, we use the calculations of \resec{setup} to prove \reth{nosplitg}, from which we shall deduce \reth{complete}.

\begin{proof}[Proof of \reth{nosplitg}.]
As we mentioned in the Introduction, the existence of an algebraic section for $p_{\ast}$ is equivalent to that of a cross-section for $p$.

The case $n=1$ was treated in Theorem~1 of~\cite{GG1}, using the fact that if $M=M_{g}$, where $g\geq 3$, then $M$ is homeomorphic to the connected sum of one or two copies of $\rp$ with a compact, orientable surface without boundary of genus at least one. 

Conversely, suppose that there exist $m\in \N$ and $n\geq 2$ for which the homomorphism $\map{p_{\ast}}{P_{n+m}(M)}[P_{n}(M)]$ admits a section. We shall argue for a contradiction. By~\cite[Proposition~3]{GG1}, it suffices to consider the case $m=1$. We first analyse the general structure of the coefficients $\indalp ijr,\indbet ijq, \indgam klr, \indeta klq$ defined by \req{defcoeffs}. 

\bigskip
\begin{enumerate}[(a)]
\item\label{it:itema} Taking $j=n$ in \req{etaikj} implies that $\indgam ikl=0$ for all $1\leqslant i\leqslant n-1$ and $1\leqslant k<l\leqslant g$.
\item\label{it:itemb} By \req{aage}, 
\begin{equation*}
\indgam ikl=-2\indeta ikj -2(\indalp ijl-\indalp ijk)
\end{equation*}
for all $1\leqslant i<j\leqslant n$ and $1\leqslant l<k\leqslant g$. Taking $j=n$, we obtain 
\begin{equation*}
\indgam ikl=-2\indeta ikn -2(\indalp inl-\indalp ink)=0
\end{equation*}
since $\indeta ikn=0$ by definition and $\indalp inr=2\indgam i11$ for all $1\leqslant i\leqslant n-1$ and $1\leqslant r \leqslant g$ by \req{alinr}.

It thus follows from~(\ref{it:itema}) and~(\ref{it:itemb}) that
\begin{equation}\label{eq:gamikl0}
\indgam ikl=0 \quad \text{for all $1\leqslant i\leqslant n-1$ and $1\leqslant k,l\leqslant g$, $k\neq l$.}
\end{equation}

\item By \req{alinr}, $\indgam ikk=\frac{1}{2}\indalp in1$ for all $1\leqslant i\leqslant n-1$ and $1\leqslant k\leqslant g$. So
\begin{equation}\label{eq:gamikk}
\indgam ikk=\indgam i11 \quad \text{for all $1\leqslant i\leqslant n-1$ and $1\leqslant k\leqslant g$.}
\end{equation}

\item\label{it:itemd} By \req{etaikj}, for all $1\leqslant k<l \leqslant g$ and $1\leqslant i<j\leqslant n$, we have
\begin{equation*}
\indeta ikj =-\frac{1}{2}\indgam ikl=0,
\end{equation*}
using \req{gamikl0}. So by taking $l=g$ we obtain
\begin{equation*}
\indeta ikj =0 \quad \text{for all $1\leqslant i<j\leqslant n$ and $1\leqslant k\leqslant g-1$.}
\end{equation*}

\item \label{it:iteme} By \req{aage}
\begin{equation*}
\indeta ikj=\frac{1}{2} \left( 2\left( \indalp ijl-\indalp ijk \right) +\indgam ikl \right)
\end{equation*}
for all $1\leqslant i<j\leqslant n$ and $1\leqslant l<k\leqslant g$. But $\indgam ikl=0$ by \req{gamikl0}, and $\indalp ijl-\indalp ijk=0$ by \req{alij1} if $j\leqslant n-1$ and by \req{alinr} if $j=n$. Setting $l=1$, it follows that 
\begin{equation*}
\indeta ikj=0 \quad \text{for all $1\leqslant i<j\leqslant n$ and $2\leqslant k\leqslant g$.}
\end{equation*}
By~(\ref{it:itemd}) and~(\ref{it:iteme}) we thus have
\begin{equation}\label{eq:etaikj0a}
\indeta ikj=0 \quad \text{for all $1\leqslant i<j\leqslant n$ and $1\leqslant k\leqslant g$.}
\end{equation}

\item\label{it:itemf} Suppose that $1\leqslant j<i\leqslant n-1$. Then 
\begin{align*}
\indeta ikj &=-\frac{1}{2}\indgam ikl \quad \text{for all $1\leqslant k<l \leqslant g$, by \req{etajli}}\\
& =0 \quad \text{by \req{gamikl0}.}
\end{align*}
So taking $l=g$, we have $\indeta ikj=0$ for all $1\leqslant k\leqslant g-1$. Further, for all $1\leqslant l< k\leqslant g$,
\begin{align*}
\indeta ikj &= -\frac{1}{2}\indgam ikl \quad \text{by \req{gamjlk}}\\
&= 0 \quad \text{by \req{gamikl0}.}
\end{align*}

Hence it follows from \req{etaikj0a} and~(\ref{it:itemf}) that
\begin{equation}\label{eq:etaikj0}
\indeta ikj=0 \quad \text{for all $1\leqslant i,j \leqslant n-1$, $i\neq j$, and $1\leqslant k\leqslant g$.}
\end{equation}




\item From 
\req{betins}, we obtain
\begin{equation}\label{eq:betins2}
\indbet ins=-\indgam i11 \quad \text{for all $1\leqslant s\leqslant n-1$, $s\neq i$.}
\end{equation}

\item By equations~\reqref{betijj} and~\reqref{etaikj0}, we see that
\begin{equation}\label{eq:gami11a}
\indgam i11=\indbet i{i+1}{i+1}=\cdots = \indbet i{n-1}{n-1} \quad \text{for all $1\leqslant i\leqslant n-2$.}
\end{equation}

\item\label{it:iteml} By equations~\reqref{betiji} and~\reqref{etaikj0}, we obtain
\begin{equation}\label{eq:gami11b}
\indgam i11=\indbet 1i1=\cdots = \indbet {i-1}i{i-1} \quad \text{for all $2\leqslant i\leqslant n-1$.}
\end{equation}
\end{enumerate}

Analysing \req{surface}, we are now able to complete the proof of \reth{nosplitg} as follows.
Let $i\in \brak{1,\ldots, n-1}$. Then the coefficient of $B_{i,n+1}$ yields:
\begin{equation}\label{eq:bin1}
1-2\sum_{l=1}^g \indeta ili -\sum_{l=1}^g \indgam ill= 
\sum_{l=1}^{i-1} \indbet lii +\sum_{l=i+1}^n \indbet ili- 2\sum_{l=1}^g \indeta ili.
\end{equation}
Now
\begin{equation*}
\sum_{l=1}^{i-1} \indbet lii=\sum_{l=1}^{i-1} \indgam l11 \quad\text{by \req{gami11a},}
\end{equation*}
and 
\begin{equation*}
\sum_{l=i+1}^n \indbet ili=\sum_{l=i+1}^n \indgam l11 \quad\text{by \req{gami11b}.}
\end{equation*}
So using \req{gamikk}, \req{bin1} becomes
\begin{equation*}
1-g \indgam i11 = \indbet ini +\sum_{l=1}^{n-1} \indgam l11-\indgam i11.
\end{equation*}
Summing over all $i=1,\ldots,n-1$, and setting $\Delta=\sum_{l=1}^{n-1} \indgam l11$ and $L= \sum_{i=1}^{n-1} \indbet ini$, we obtain
\begin{equation}\label{eq:ng2G}
(n+g-2)\Delta =(n-1)-L.
\end{equation}

Now let $i=n$, and let $k\in \brak{1,\ldots,n-1}$. Since $\indeta nln=0$, the coefficient of $B_{k,n+1}$ in \req{surface} yields:
\begin{align*}
\sum_{l=1}^g \indgam nll-1 &= \sum_{l=1}^{n-1}\indbet lnk -2\sum_{l=1}^g \indeta nlk= \indbet knk +\sum_{\substack{l=1\\ l\neq k}}^{n-1} \indbet lnk-2\sum_{l=1}^g \indeta nlk\\
&= \indbet knk -\sum_{\substack{l=1\\ l\neq k}}^{n-1} \indgam l11 -2\sum_{l=1}^g \indeta nlk \quad \text{by \req{betins2}}\\
&=\indbet knk -(\Delta- \indgam k11) +\sum_{l=1}^g \left( -\indbet knk +\indgam nll -\indgam kll \right) \quad \text{by \req{betini}}\\
&= (1-g)\indbet knk +\indgam k11-\Delta +\sum_{l=1}^g \indgam nll- \sum_{l=1}^g \indgam k11 \quad \text{by \req{gamikk}}\\
&= (1-g)\indbet knk +(1-g)\indgam k11-\Delta +\sum_{l=1}^g \indgam nll.
\end{align*}
Hence
$-1=(1-g)\indbet knk +(1-g)\indgam k11-\Delta$.
Summing over all $k=1,\ldots,n-1$, we obtain
\begin{equation}\label{eq:ng2Ga}
(n+g-2)\Delta=(1-g)L+(n-1).
\end{equation}
Equating equations~\reqref{ng2G} and~\reqref{ng2Ga}, we see that
$(n-1)-L=(1-g)L+(n-1)$.
Since $g\geqslant 3$, it follows that $L=0$, and therefore 
\begin{equation*}
\Delta=\frac{n-1}{(n-1)+(g-1)}
\end{equation*}
by \req{ng2G}. This yields a contradiction to the fact that $\Delta$ is an integer, and thus completes the proof of \reth{nosplitg}.
\end{proof}

\begin{rem}
Although some of the relations derived in~(\ref{it:itema})--(\ref{it:iteml}) do not exist if $n=2$, one may check that the above analysis from \req{bin1} onwards is also valid in this case (with $\Delta=\indgam 111$ and $L=\indbet 121$).
\end{rem}

\begin{proof}[Proof of \reth{complete}.]\mbox{}
\begin{enumerate}[(a)]
\item If $r>0$ then the result follows applying the methods of the proofs of Proposition~27 and Theorem~6 of~\cite{GG1}. If $r=0$ and $M$ has non-empty boundary, let $C$ be a boundary component of $M$. Then $M'=M\setminus C$ is homeomorphic to a compact surface with a single point deleted (which is the case $r=1$), so \reqref{split} splits for $M'$. The inclusion of $M'$ in $M$ not only induces a homotopy equivalence between $M$ and $M'$, but also a homotopy equivalence between their $n\up{th}$ configuation spaces. Therefore their $n\up{th}$ pure braid groups are isomorphic, and the sequence~\reqref{split} for $M$ splits if and only it splits for $M'$. 

\item Suppose that $r=0$ and that $M$ is without boundary. If $M=\St$, $m=1$ and $n\geq 3$ then the statement follows from~\cite{Fa}. The geometric construction of Fadell may be easily generalised to all $m\in \N$. If $n\in\brak{1,2}$, the result is obvious since $P_{n}(\St)$ is trivial.   If $M=\mathbb{T}^2$ or $\mathbb{K}^2$, the fact that $p_{\ast}$ has a section is a consequence of~\cite{FaN} and the fact that $\mathbb{T}^2$ and $\mathbb{K}^2$ admit a non-vanishing vector field. If $M=\rp$ then $p_{\ast}$ admits a section if and only if $n=2$ and $m=1$ by~\cite{GG7}. Finally, if $M\neq \rp,\St,\mathbb{T}^2,\mathbb{K}^2$ then $p_{\ast}$ admits a section if and only if $n=1$ by \reth{nosplitg} for the non-orientable case, and by~\cite{GG1} for the orientable case.\qedhere
\end{enumerate}
\end{proof}

\end{document}